\providecommand{\U}[1]{\protect\rule{.1in}{.1in}}
\newtheorem{theorem}{Theorem}
\newtheorem{lemma}[theorem]{Lemma}
\newenvironment{proof}[1][Proof]{\noindent\textbf{#1.} }{\ \rule{0.5em}{0.5em}}
\begin{document}

\begin{center}
\textbf{Characterizing Level-Set Families of Harmonic Functions}%
{\large \footnote{MSC2010:\ Primary 31A05, 31B05; Secondary 53A04, 53A05,
53A07.
\par
Key words:\ harmonic function, level set, curvature.}}

Pisheng Ding
\end{center}

\begin{quote}
\textsc{Abstract. }{\small Families of hypersurfaces that are level-set
families of harmonic functions free of critical points are characterized by a
local differential-geometric condition. Harmonic functions with a specified
level-set family are constructed from geometric data. As a by-product, it is
shown that the evolution of the gradient of a harmonic function along the
gradient flow is determined by the mean curvature of the level sets that the
flow intersects.}
\end{quote}

\section{Introduction}

Harmonic functions on $%
\mathbb{R}
^{n}$ are those whose Laplacian vanishes identically. In this note, by
analyzing certain differential-geometric properties of their level sets, we
give a local characterization of their level-set families.

For harmonic functions of two variables, it is already quite difficult to
characterize their level curves; see, e.g., \cite{F-D-S}. Level hypersurfaces
of harmonic functions of more than two variables are even more intractable
(especially without the complex-analytic tools available in the two-variable
case). This note shows that \textit{families} of level-sets of harmonic
functions free of critical points are somehow easier to characterize. The
difference between an individual curve or hypersurface and a family of them is
that the former is \textquotedblleft static\textquotedblright\ whereas the
latter contains \textquotedblleft kinematic\textquotedblright\ information
that is more readily relatable to harmonicity.

In this Introduction, we state our main result in the two-variable case for
simplicity. The general case will be treated in \S 3.

\begin{theorem}
\label{Thm Main}Let $\Phi:%
\mathbb{R}
\times(-\epsilon,\epsilon)\rightarrow%
\mathbb{R}
^{2}$ be an orientation-preserving $C^{2}$ diffeomorphism onto a domain
$\Omega\subset%
\mathbb{R}
^{2}$; let $\left(  x(\sigma;t),\,y(\sigma;t)\right)  =\Phi(\sigma;t)$. For
each $t\in(-\epsilon,\epsilon)$, define $\gamma_{t}$ to be the curve
$\sigma\mapsto\Phi(\sigma;t)$.\ Let%
\[
\varphi=\frac{\det d\Phi}{\left\Vert \gamma_{t}^{\prime}\right\Vert
}\text{\quad and\quad}\mathbf{N}=\frac{\left(  -\partial y/\partial
\sigma\right)  \mathbf{e}_{1}+\left(  \partial x/\partial\sigma\right)
\mathbf{e}_{2}}{\left\Vert \gamma_{t}^{\prime}\right\Vert }\text{ ;}%
\]
let $s$ be the arc-length parameter (modulo an additive constant) along each
integral curve of $\mathbf{N}$. For $p\in\Omega$, let $\kappa(p)$ be the
signed curvature at $p$ of the curve $\gamma_{t}$ on which $p$ lies (with
$\kappa$ signed in accordance with the normal field $\mathbf{N}$). Then, there
exists a critical-point-free harmonic function $U$ on $\Omega$ with $\left\{
\gamma_{t}\mid t\in(-\epsilon,\epsilon)\right\}  $ being its level-curve
family iff $(\partial\varphi/\partial s)+\kappa\varphi$ is constant on each
curve $\gamma_{t}$, i.e.,%
\begin{equation}
\frac{\partial}{\partial\sigma}\left(  \frac{\partial\varphi}{\partial
s}+\kappa\varphi\right)  \equiv0\text{ .} \label{Eq Intro}%
\end{equation}

\end{theorem}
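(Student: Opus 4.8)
The plan is to pass to the orthonormal frame adapted to the foliation, reduce Laplace's equation to a first-order transport equation along the flow of $\mathbf{N}$, and read the stated condition off the result. First I would record three identities. Set $\mathbf{T}=\gamma_t'/\lVert\gamma_t'\rVert$, so that $(\mathbf{T},\mathbf{N})$ is an orthonormal frame on $\Omega$ with $\mathbf{N}$ the $+90^{\circ}$ rotation of $\mathbf{T}$ (hence $\det[\mathbf{T},\mathbf{N}]\equiv1$). Writing $\partial\Phi/\partial t=\alpha\,\mathbf{T}+\beta\,\mathbf{N}$ and expanding $\det d\Phi=\det[\lVert\gamma_t'\rVert\mathbf{T},\ \alpha\mathbf{T}+\beta\mathbf{N}]=\lVert\gamma_t'\rVert\,\beta$ identifies $\beta=\varphi$, so $\varphi=(\partial\Phi/\partial t)\cdot\mathbf{N}$ is the normal speed of the foliation; the orientation hypothesis (with $\Phi$ an immersion) makes $\varphi>0$ on $\Omega$. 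Regarding $t$ as a $C^{2}$ function on $\Omega$ (the second coordinate of $\Phi^{-1}$, which is $C^{2}$ by the inverse function theorem), differentiation of $t\circ\Phi\equiv t$ in $\sigma$ and in $t$ gives $\nabla t\cdot\partial_{\sigma}\Phi=0$ and $\nabla t\cdot\partial_{t}\Phi=1$, so
\[
\nabla t=\frac{1}{\varphi}\,\mathbf{N}.
\]
Finally, for the unit field $\mathbf{N}$ one has $\nabla\cdot\mathbf{N}=\mathbf{T}\cdot\nabla_{\mathbf{T}}\mathbf{N}+\mathbf{N}\cdot\nabla_{\mathbf{N}}\mathbf{N}$ (Euclidean directional derivatives), the second term vanishing since $\lvert\mathbf{N}\rvert\equiv1$; as $\nabla_{\mathbf{T}}\mathbf{N}$ is the derivative of the unit normal along $\gamma_t$, the plane-curve Frenet relation $d\mathbf{N}/ds=-\kappa\,\mathbf{T}$ (with $\kappa$ signed against $\mathbf{N}$, as in the statement) yields
\[
\nabla\cdot\mathbf{N}=-\kappa .
\]

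For necessity, suppose $U$ is harmonic and critical-point-free with level-curve family $\{\gamma_t\}$. Then $U=f\circ t$ on $\Omega$ for a $C^{2}$ function $f$, and $f'$ vanishes nowhere (else $\nabla U=f'(t)\nabla t$ would vanish along an entire $\gamma_t$). Hence $\nabla U=(f'(t)/\varphi)\,\mathbf{N}$; putting $g:=f'(t)/\varphi$ and using the product rule for divergence with the two displayed identities,
\[
0=\Delta U=\nabla\cdot(g\,\mathbf{N})=\mathbf{N}\cdot\nabla g+g\,\nabla\cdot\mathbf{N}=\frac{\partial g}{\partial s}-\kappa g .
\]
Since $\mathbf{N}\cdot\nabla t=1/\varphi$ and $\mathbf{N}\cdot\nabla\varphi=\partial\varphi/\partial s$, one finds $\partial g/\partial s=\varphi^{-2}\bigl(f''(t)-f'(t)\,\partial\varphi/\partial s\bigr)$; substituting and clearing $\varphi^{2}$ gives $f'(t)\bigl(\partial\varphi/\partial s+\kappa\varphi\bigr)=f''(t)$, i.e.
\[
\frac{\partial\varphi}{\partial s}+\kappa\varphi=\frac{f''(t)}{f'(t)},
\]
which depends on $t$ alone; hence $\partial_{\sigma}\bigl(\partial\varphi/\partial s+\kappa\varphi\bigr)\equiv0$.

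For sufficiency, run this in reverse. If $\partial\varphi/\partial s+\kappa\varphi$ is constant along each $\gamma_t$, it equals $h(t)$ for a function $h$ continuous in $t$ (being the restriction of a function continuous on $\Omega$). Solving the linear ODE $f''/f'=h$ — say $f'(t)=\exp\!\int_{0}^{t}h$, so that $f'>0$ and $f\in C^{2}$ — and setting $U:=f\circ t$, we get $U\in C^{2}$, $\nabla U=(f'(t)/\varphi)\,\mathbf{N}$, and, reversing the computation above, $\Delta U=\varphi^{-2}f'(t)\bigl(h(t)-(\partial\varphi/\partial s+\kappa\varphi)\bigr)=0$. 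Moreover $\lvert\nabla U\rvert=f'(t)/\varphi>0$, so $U$ is critical-point-free; and since $f$ is injective, the nonempty level sets of $U$ are exactly the curves $\gamma_t$. This gives the equivalence.

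I expect the substance to lie entirely in the two geometric identities $\nabla t=\mathbf{N}/\varphi$ and $\nabla\cdot\mathbf{N}=-\kappa$ and in the short reduction collapsing $\Delta U=0$ to the single scalar equation $\partial\varphi/\partial s+\kappa\varphi=f''/f'$: once these are set up correctly, both directions are immediate. The main friction should be bookkeeping — keeping the sign conventions built into $\mathbf{N}$ and $\kappa$ and the orientation normalization of $\varphi$ mutually consistent — rather than any genuine difficulty; regularity (here $t\in C^{2}$, hence $U\in C^{2}$ and then smooth) is routine. The same scheme should carry over to the general case of $\S3$, with the mean curvature of the level hypersurfaces in place of $\kappa$.
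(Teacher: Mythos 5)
Your proof is correct, and its overall skeleton matches the paper's: reduce to $U=f\circ t$ with $\nabla t=\mathbf{N}/\varphi$, derive the single scalar identity $\partial\varphi/\partial s+\kappa\varphi=f''/f'$, and read off necessity (the right side depends on $t$ alone) and sufficiency (integrate the ODE for $f$). The one genuine difference is the key geometric input. The paper isolates Lemma \ref{Lemma Curvature of Level Curves}, expressing $\kappa$ through the Hessian of $f$ (proved by differentiating $\langle\nabla f\circ\gamma,\gamma'\rangle=0$ along a level curve and using $\operatorname{Tr}Q_p=\Delta f$), and then verifies $D_{\mathbf{N}}^{2}U=\kappa D_{\mathbf{N}}U$ by computing the two directional derivatives. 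You instead write $\Delta U=\nabla\cdot(g\mathbf{N})$ with $g=f'(t)/\varphi$ and invoke $\nabla\cdot\mathbf{N}=-\kappa$, obtained from the frame decomposition of the divergence and the Frenet relation. The two identities are equivalent (the paper's lemma is exactly the statement $\Delta f=D_{\mathbf{N}}^{2}f+(\nabla\cdot\mathbf{N})\,D_{\mathbf{N}}f$ in disguise), but your divergence-form packaging turns Laplace's equation directly into a first-order transport equation for $g$ along the flow of $\mathbf{N}$, which is slightly slicker and, via $\nabla\cdot\mathbf{N}=-(n-1)H$, carries over verbatim to the $n$-variable case of \S 3, just as the paper's Lemma \ref{Lemma Curvature of Level Surfaces} does. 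Your sign conventions are consistent with the paper's throughout, and the auxiliary points (positivity of $\varphi$, nonvanishing of $f'$, regularity of $t$ and $f$) are handled at the same level of care as in the original.
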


A few words on notation are in order. The quantity $(\partial\varphi/\partial
s)+\kappa\varphi$ can be construed as a function on both $\Omega$ and $%
\mathbb{R}
\times(-\epsilon,\epsilon)$, via the mapping $\Phi$ between the two domains.
Strictly speaking, for $\partial/\partial\sigma$ to be meaningful, we should
interpret $(\partial\varphi/\partial s)+\kappa\varphi$ as a function on $%
\mathbb{R}
\times(-\epsilon,\epsilon)$. Thus, for $q\in%
\mathbb{R}
\times(-\epsilon,\epsilon)$, $(\kappa\varphi)(q)$ means $\kappa(\Phi
(q))\varphi(q)$, whereas $(\partial\varphi/\partial s)(q)$ is interpreted as
follows: if $\alpha:\left(  -\delta,\delta\right)  \rightarrow\Omega$ is the
(unit-speed) integral curve of $\mathbf{N}$ with $\alpha(0)=\Phi(q)$, then%
\[
\frac{\partial\varphi}{\partial s}(q)=(\varphi\circ\Phi^{-1}\circ
\alpha)^{\prime}(0)=\left.  \frac{d}{ds}\right\vert _{s=0}\varphi(\Phi
^{-1}(\alpha(s)))\text{ .}%
\]

\medskip

\noindent\textit{Remark}. To a $C^{2}$ family of simple closed curves
$\Psi:S^{1}\times(-\epsilon,\epsilon)\rightarrow%
\mathbb{R}
^{2}$, Theorem \ref{Thm Main} can be applied in either of the following two ways.

First, we may use the covering map $\pi:%
\mathbb{R}
\rightarrow S^{1}$, $\sigma\mapsto(\cos\sigma,\sin\sigma)$, to define%
\[
\Phi:%
\mathbb{R}
\times(-\epsilon,\epsilon)\overset{\pi\times\operatorname*{Id}%
}{\longrightarrow}S^{1}\times(-\epsilon,\epsilon)\overset{\Psi}{\rightarrow}%
\mathbb{R}
^{2}\text{ ,}%
\]
which is a local diffeomorphism. With this $\Phi$, Theorem \ref{Thm Main} applies.

In the other approach, which would prove suitable for the more general case
treated in \S 3, we cover $S^{1}$ with an atlas of two local charts $\phi_{i}:%
\mathbb{R}
\rightarrow\mathcal{O}_{i}\hookrightarrow S^{1}$ and then define%
\[
\Phi_{i}:%
\mathbb{R}
\times(-\epsilon,\epsilon)\overset{\phi_{i}\times\operatorname*{Id}%
}{\longrightarrow}\mathcal{O}_{i}\times(-\epsilon,\epsilon)\hookrightarrow
S^{1}\times(-\epsilon,\epsilon)\overset{\Psi}{\rightarrow}%
\mathbb{R}
^{2}\text{ .}%
\]
Theorem \ref{Thm Main} then implies that the given family of simple closed
curves is the level-curve family of a harmonic function iff both $\Phi_{i}$
meet Condition (\ref{Eq Intro}).

\smallskip

Theorem \ref{Thm Main} will be proved in \S 2.2, following an exposition of
some preliminaries in \S 2.1. In \S 3, we generalize this result to the
many-variable case, wherein the mean curvature $H$ of level hypersurfaces
figures in place of $\kappa$ in a condition parallel to (\ref{Eq Intro}) that
characterizes level-set families of harmonic functions. When the condition is
met by a given family, we construct in \S 4 a harmonic function $U$ whose
level-set family is the given one.

Related to the theme of this note is another fundamental observation
concerning how the geometry of the level sets of a harmonic function determine
its gradient. We now state this result.

\begin{theorem}
\label{Thm Gradient}Let $U$ be a critical-point-free harmonic function on a
domain $\Omega\subset%
\mathbb{R}
^{n}$. Let $\alpha:\left(  -\delta,\delta\right)  \rightarrow\Omega$ be an
integral curve of the vector field $\nabla U/\left\Vert \nabla U\right\Vert $.
Then, for $s\in\left(  -\delta,\delta\right)  $,%
\[
\left\Vert \nabla U(\alpha(s))\right\Vert =\left\Vert \nabla U(\alpha
(0))\right\Vert \exp\left(  (n-1)\int_{0}^{s}H(\alpha(\xi))\,d\xi\right)
\text{ ,}%
\]
where, for any $p\in\Omega$, $H(p)$ is the mean curvature at $p$ of the
level-$U(p)$ hypersurface oriented by the normal field $\nabla U/\left\Vert
\nabla U\right\Vert $.
\end{theorem}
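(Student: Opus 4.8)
The plan is to reduce the identity to a first-order linear ODE for the scalar $g(s):=\left\Vert \nabla U(\alpha(s))\right\Vert$ and solve it. Since a harmonic function is real-analytic and $U$ has no critical points, the unit field $\mathbf{N}:=\nabla U/\left\Vert \nabla U\right\Vert$ is a smooth vector field on $\Omega$; hence $\alpha$ exists and is smooth, and $g$ is smooth and positive. It therefore suffices to prove that $g'(s)=(n-1)H(\alpha(s))\,g(s)$, for then $(\log g)'=(n-1)\,H\circ\alpha$, and integrating from $0$ to $s$ and exponentiating gives the stated formula.

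The derivative $g'$ is extracted from harmonicity. Regard $g$ also as the function $\left\Vert \nabla U\right\Vert$ on $\Omega$, so that $\nabla U=g\,\mathbf{N}$. Expanding the Laplacian via the product rule for the divergence,
\[
0=\Delta U=\operatorname{div}(\nabla U)=\operatorname{div}(g\,\mathbf{N})=\langle\nabla g,\mathbf{N}\rangle+g\,\operatorname{div}\mathbf{N}.
\]
Because $\alpha$ is the unit-speed integral curve of $\mathbf{N}$, one has $\langle\nabla g,\mathbf{N}\rangle(\alpha(s))=\frac{d}{ds}g(\alpha(s))=g'(s)$, so that $g'(s)=-g(s)\,(\operatorname{div}\mathbf{N})(\alpha(s))$.

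It remains to identify $\operatorname{div}\mathbf{N}$ with mean curvature. Along the level hypersurface $\Sigma$ of $U$ through a point $p$, which carries $\mathbf{N}$ as unit normal, an orthonormal frame adapted to $\Sigma$ shows $\operatorname{div}\mathbf{N}=\sum_{i=1}^{n-1}\langle\nabla_{e_i}\mathbf{N},e_i\rangle$, the normal component $\langle\nabla_{\mathbf N}\mathbf N,\mathbf N\rangle$ vanishing since $\left\Vert\mathbf{N}\right\Vert$ is constant; thus $\operatorname{div}\mathbf{N}$ at $p$ is the trace of the endomorphism $X\mapsto\nabla_X\mathbf{N}$ of $T_p\Sigma$, i.e. the shape operator up to sign. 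With the curvature sign convention fixed for the theorem — the one oriented by $\mathbf{N}=\nabla U/\left\Vert\nabla U\right\Vert$, consistent with the convention used for $\kappa$ in Theorem \ref{Thm Main} — this trace equals $-(n-1)H(p)$. Substituting $(\operatorname{div}\mathbf{N})(\alpha(s))=-(n-1)H(\alpha(s))$ into the previous display yields $g'(s)=(n-1)H(\alpha(s))\,g(s)$, as required. There is no deep obstacle here; the only genuine care is needed in this last step, namely keeping the curvature sign consistent with the orientation by $\mathbf{N}$ so that the exponent comes out with the correct sign (and, in the $n=2$ case, recovering the familiar $\operatorname{div}\mathbf{N}=-\kappa$).
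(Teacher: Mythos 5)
Your proof is correct, and it takes a route that is genuinely different in packaging from the paper's, though the two are algebraically equivalent. The paper sets $g:=U\circ\alpha$, notes $g'=\left\Vert \nabla U\circ\alpha\right\Vert$ and $g''=D_{\mathbf{N}}^{2}U\circ\alpha$, and then quotes Lemma \ref{Lemma Curvature of Level Surfaces} in the form $(n-1)H=D_{\mathbf{N}}^{2}U/D_{\mathbf{N}}U$ to get $(\log g')'=(n-1)H\circ\alpha$ directly; your $g$ is the paper's $g'$. In place of Lemma \ref{Lemma Curvature of Level Surfaces} you use the divergence identity $0=\Delta U=\operatorname{div}(g\,\mathbf{N})=D_{\mathbf{N}}g+g\operatorname{div}\mathbf{N}$ together with the classical fact $\operatorname{div}\mathbf{N}=-(n-1)H$ (trace of the shape operator, computed in an adapted frame with the normal term killed by $\left\Vert \mathbf{N}\right\Vert \equiv1$). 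Writing out $\operatorname{div}(\nabla U/\left\Vert \nabla U\right\Vert )=(\Delta U-D_{\mathbf{N}}^{2}U)/\left\Vert \nabla U\right\Vert$ shows your identity is exactly equation (\ref{Eq Mean Curvature}) rearranged, so no new content is at stake; but your version is self-contained (the paper omits the proof of (\ref{Eq Mean Curvature})) and makes the mechanism transparent: harmonicity forces the flux density $\left\Vert \nabla U\right\Vert$ to grow along the flow exactly as the level sets converge, measured by $\operatorname{div}\mathbf{N}$. Your sign bookkeeping is consistent with the paper's convention ($H$ the average of $-D_{\mathbf{v}}^{2}U/\left\Vert \nabla U\right\Vert$ over unit $\mathbf{v}$ tangent to the level set, whence $\operatorname{div}\mathbf{N}=-(n-1)H$ and, for $n=2$, $\operatorname{div}\mathbf{N}=-\kappa$), so the exponent comes out with the correct sign.
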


Note that $\alpha$ by definition has unit speed and therefore is the
arc-length parametrization of a gradient flow.

This theorem will be proved in \S 3.

\section{The Two-variable Case}

We base our proof for Theorem \ref{Thm Main} on a characterization of
harmonicity of a $C^{2}$ function in terms of curvature of its level sets and
its directional derivatives, which we first turn to.

\subsection{Characterizing Harmonicity in Terms of Curvature of Level Curves}

Let $f$ be a $C^{2}$ function on a domain $\Omega\subset%
\mathbb{R}
^{2}$ with no critical points. For $p\in\Omega$ and a unit vector
$\mathbf{v}\in T_{p}%
\mathbb{R}
^{2}$, denote by $D_{\mathbf{v}}f(p)$ and $D_{\mathbf{v}}^{2}f(p)$ the first
and second directional derivatives of $f$ at $p$ along $\mathbf{v}$. Denote by
$Q_{p}:T_{p}%
\mathbb{R}
^{2}\times T_{p}%
\mathbb{R}
^{2}\rightarrow%
\mathbb{R}
$ the Hessian quadratic form of $f$ at $p$; recall that $D_{\mathbf{v}}%
^{2}f(p)=Q_{p}(\mathbf{v},\mathbf{v})$.

Let $C$ be a level curve of $f$. Install on $C$ the unit normal field
$\mathbf{N}:=\nabla f/\left\Vert \nabla f\right\Vert $; for the unit tangent
field $\mathbf{T}$, let it be such that the frame $(\mathbf{T},\mathbf{N})$ is
positively-oriented (but the opposite choice will do as well). The
\textit{signed}\ \textit{curvature} $\kappa$ of the curve $C$ at each point
thereon is defined by the equation $d\mathbf{T}/d\sigma=\kappa\mathbf{N}$,
where $\sigma$ is the arc-length parameter along $C$ (with its increasing
direction induced by $\mathbf{T}$); note that the sign of $\kappa$ depends on
the choice we make of $\mathbf{N}$, but not of $\mathbf{T}$. So defined,
$\kappa$ is a scalar field on $\Omega$.

\begin{lemma}
\label{Lemma Curvature of Level Curves}Assume the preceding hypothesis. For
$p\in\Omega$,%
\begin{equation}
\kappa(p)=-\frac{D_{\mathbf{T}}^{2}f(p)}{D_{\mathbf{N}}f(p)}=\frac
{D_{\mathbf{N}}^{2}f(p)-\Delta f(p)}{D_{\mathbf{N}}f(p)}\text{ .}
\label{Eq Curvature Level Curves, General}%
\end{equation}
Consequently, $f$ is harmonic iff%
\begin{equation}
\kappa\equiv\frac{D_{\mathbf{N}}^{2}f}{D_{\mathbf{N}}f}\text{\quad on }%
\Omega\,\text{.} \label{Eq Curvature Level Curves, Harmonic}%
\end{equation}

\end{lemma}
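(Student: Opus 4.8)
The key identity to establish is the first equality in (\ref{Eq Curvature Level Curves, General}), namely $\kappa = -D_{\mathbf{T}}^2 f / D_{\mathbf{N}} f$; the rest will follow by bookkeeping. The plan is to compute $D_{\mathbf{T}}^2 f$ along the level curve $C$ directly. Parametrize $C$ by arc length as $\sigma \mapsto c(\sigma)$, so that $c'(\sigma) = \mathbf{T}$. Since $f$ is constant along $C$, differentiating $f(c(\sigma)) \equiv \text{const}$ once gives $\nabla f \cdot \mathbf{T} = 0$ (consistent with $\mathbf{N} = \nabla f/\|\nabla f\|$ being normal), and differentiating a second time gives
\[
0 = \frac{d^2}{d\sigma^2} f(c(\sigma)) = Q_p(\mathbf{T},\mathbf{T}) + \nabla f \cdot c''(\sigma) = D_{\mathbf{T}}^2 f(p) + \nabla f \cdot c''(\sigma).
\]
Now $c'' = d\mathbf{T}/d\sigma = \kappa \mathbf{N}$ by the definition of signed curvature, so $\nabla f \cdot c'' = \kappa\, \nabla f \cdot \mathbf{N} = \kappa \|\nabla f\| = \kappa\, D_{\mathbf{N}} f(p)$. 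Substituting yields $0 = D_{\mathbf{T}}^2 f(p) + \kappa(p)\, D_{\mathbf{N}} f(p)$, which rearranges to the first equality (note $D_{\mathbf{N}} f = \|\nabla f\| \neq 0$ since $f$ is critical-point-free, so the division is legitimate).

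For the second equality, I would expand the Laplacian in the orthonormal frame $(\mathbf{T}, \mathbf{N})$: since the trace of the Hessian quadratic form is frame-independent, $\Delta f(p) = Q_p(\mathbf{T},\mathbf{T}) + Q_p(\mathbf{N},\mathbf{N}) = D_{\mathbf{T}}^2 f(p) + D_{\mathbf{N}}^2 f(p)$. Hence $D_{\mathbf{T}}^2 f = \Delta f - D_{\mathbf{N}}^2 f$, and substituting into the first expression for $\kappa$ gives $\kappa = (D_{\mathbf{N}}^2 f - \Delta f)/D_{\mathbf{N}} f$, as claimed. The final assertion (\ref{Eq Curvature Level Curves, Harmonic}) is then immediate: $\Delta f \equiv 0$ on $\Omega$ iff $D_{\mathbf{N}}^2 f - \Delta f \equiv D_{\mathbf{N}}^2 f$, i.e. iff $\kappa D_{\mathbf{N}} f \equiv D_{\mathbf{N}}^2 f$, which — again using $D_{\mathbf{N}} f \neq 0$ everywhere — is equivalent to $\kappa \equiv D_{\mathbf{N}}^2 f / D_{\mathbf{N}} f$ on $\Omega$.

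I do not anticipate a serious obstacle here; this is a direct computation. The one point requiring a little care is consistency of sign conventions: one must verify that the $\mathbf{N}$ appearing in the Frenet-type equation $d\mathbf{T}/d\sigma = \kappa\mathbf{N}$ is indeed the same $\mathbf{N} = \nabla f/\|\nabla f\|$ used throughout, and that $D_{\mathbf{N}} f = \nabla f \cdot \mathbf{N} = \|\nabla f\| > 0$ with the stated orientation — so that the sign of $\kappa$ in the conclusion matches the sign fixed by the definition. Since the frame $(\mathbf{T},\mathbf{N})$ was chosen positively oriented and $\mathbf{N}$ was defined as $\nabla f/\|\nabla f\|$ in the hypothesis, these match by construction, and the remark in the hypothesis that the opposite choice of $\mathbf{T}$ does not affect $\kappa$ is explained by the fact that $c''$ is quadratic in the parametrization direction. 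It is also worth remarking that (\ref{Eq Curvature Level Curves, General}) makes sense pointwise on all of $\Omega$ because through each $p$ there passes exactly one level curve of the critical-point-free function $f$.
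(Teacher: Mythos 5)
Your proof is correct and follows essentially the same route as the paper: differentiating $f$ twice along a unit-speed parametrization of the level curve (equivalently, differentiating $\langle\nabla f\circ\gamma,\gamma'\rangle\equiv 0$ once, as the paper does), using $\gamma''=\kappa\mathbf{N}$ and $D_{\mathbf{N}}f=\Vert\nabla f\Vert$, and then invoking the frame-independence of the Hessian trace. No substantive difference.
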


\begin{proof}
Let $\gamma$ be the unit-speed parametrization of an arc on the level-$f(p)$
curve, with $\gamma(0)=p$ and $\gamma^{\prime}(0)=\mathbf{T}(p)$. By
definition, $\gamma^{\prime\prime}(0)=\kappa(p)\mathbf{N}(p)$. For all $t$,
$\left\langle \nabla f(\gamma(t)),\,\gamma^{\prime}(t)\right\rangle =0$; hence%
\[
\left\langle \frac{d}{dt}\nabla f(\gamma(t)),\,\gamma^{\prime}(t)\right\rangle
+\left\langle \nabla f(\gamma(t)),\,\gamma^{\prime\prime}(t)\right\rangle
=0\text{ .}%
\]
Note that%
\[
\left\langle \left.  \frac{d}{dt}\right\vert _{t=0}\nabla f(\gamma
(t),\,\gamma^{\prime}(0)\right\rangle =D_{\mathbf{T}}^{2}f(p)\text{ ,}%
\]
whereas%
\[
\left\langle \nabla f(\gamma(0)),\,\gamma^{\prime\prime}(0)\right\rangle
=\kappa(p)\left\Vert \nabla f(p)\right\Vert \text{ .}%
\]
Now (\ref{Eq Curvature Level Curves, General}) follows by noting that
$D_{\mathbf{N}(p)}f(p)=\left\Vert \nabla f(p)\right\Vert $ and that%
\[
D_{\mathbf{T}}^{2}f(p)+D_{\mathbf{N}}^{2}f(p)=\operatorname*{Tr}Q_{p}=\Delta
f(p)\text{ .}%
\]

\end{proof}

We are now ready to establish Theorem \ref{Thm Main}.

\subsection{Proof of Theorem \ref{Thm Main}}

Let $J$ denote the interval $(-\epsilon,\epsilon)$. Recall from \S 1 that
$\Phi:%
\mathbb{R}
\times J\rightarrow%
\mathbb{R}
^{2}$, with $\Phi(\sigma;t)=\left(  x(\sigma;t),\,y(\sigma;t)\right)  $, is an
orientation-preserving $C^{2}$ diffeomorphism onto a domain $\Omega$, that
$\gamma_{t}$ is the curve $\sigma\mapsto\Phi(\sigma,t)$, and that%
\begin{equation}
\varphi=\frac{\det d\Phi}{\left\Vert \gamma_{t}^{\prime}\right\Vert
}\text{\quad and\quad}\mathbf{N}=\frac{\left(  -\partial y/\partial
\sigma\right)  \mathbf{e}_{1}+\left(  \partial x/\partial\sigma\right)
\mathbf{e}_{2}}{\left\Vert \gamma_{t}^{\prime}\right\Vert }\text{ .}
\label{Eq Definition of N & Phi}%
\end{equation}
(Note that $\varphi>0$.) Theorem \ref{Thm Main} asserts that there exists a
harmonic function $U$ on $\Omega$ with $\left\{  \gamma_{t}\right\}  _{t\in
J}$ being its level-curve family iff%
\begin{equation}
\frac{\partial}{\partial\sigma}\left(  \frac{\partial\varphi}{\partial
s}+\kappa\varphi\right)  \equiv0\text{ ,} \label{Eq Master Eq}%
\end{equation}
where $s$ is the arc-length parameter along each integral curve of
$\mathbf{N}$ and $\kappa$ is the signed curvature of $\gamma_{t}$ (signed in
accordance with the normal field $\mathbf{N}$).

We now set out to prove Theorem \ref{Thm Main}.

Let $t:\Omega\rightarrow J$ denote the second component of $\Phi^{-1}$. (For
$p\in\Omega$, $t(p)$ is characterized by the condition that $p$ is on the
curve $\gamma_{t(p)}$.) By the inverse function theorem,%
\[
\nabla t=\frac{1}{\varphi}\mathbf{N}\text{ .}%
\]

Any function having the $\gamma_{t}$'s as its level curves evidently depends
solely on $t$. Therefore, consider a function $U:\Omega\rightarrow%
\mathbb{R}
$ of the form $u\circ t$ where $u:J\rightarrow%
\mathbb{R}
$ is a $C^{2}$ function free of critical points. Then,%
\[
\nabla U=u^{\prime}\cdot\nabla t=\frac{u^{\prime}}{\varphi}\mathbf{N}\text{ .}%
\]
Clearly, $U$ is free of critical point on $\Omega$. Note that $u^{\prime}$ on
$J$ is either strictly positive or strictly negative (by Darboux's theorem).
Without loss of generality, assume that $u^{\prime}>0$, in which case $\nabla
U/\left\Vert \nabla U\right\Vert =\mathbf{N}$ and the curvature of level
curves of $U$ as defined in \S 2.1\ has the same sign as the curvature
$\kappa$ on $\Omega$ introduced in \S 1.

Now that $U=u\circ t$ is a critical-point-free $C^{2}$ function on $\Omega$
with $\left\{  \gamma_{t}\right\}  _{t\in J}$ being its level-curve family, we
seek a necessary and sufficient condition for $U$ to be harmonic.

With $s$ denoting the arc-length parameter along the integral curves of
$\mathbf{N}$ (which are also gradient flows of $t$), we compute $D_{\mathbf{N}%
}U$ and $D_{\mathbf{N}}^{2}U$. First,%
\[
D_{\mathbf{N}}U=\frac{\partial}{\partial s}(u\circ t)=u^{\prime}%
(t)\frac{\partial t}{\partial s}=u^{\prime}(t)\left\Vert \nabla t\right\Vert
=\frac{u^{\prime}(t)}{\varphi}%
\]
(or more simply:\ $D_{\mathbf{N}}U=\left\langle \nabla U,\mathbf{N}%
\right\rangle =u^{\prime}(t)/\varphi$\thinspace); also%
\[
D_{\mathbf{N}}^{2}U=\frac{\partial^{2}}{\partial s^{2}}(u\circ t)=\frac
{\partial}{\partial s}\frac{u^{\prime}(t)}{\varphi}=\frac{u^{\prime\prime
}(t)\left\Vert \nabla t\right\Vert \varphi-u^{\prime}(t)\frac{\partial\varphi
}{\partial s}}{\varphi^{2}}=\frac{u^{\prime\prime}(t)-u^{\prime}%
(t)\frac{\partial\varphi}{\partial s}}{\varphi^{2}}\text{ .}%
\]
Hence, the condition that $D_{\mathbf{N}}^{2}U=\kappa D_{\mathbf{N}}U$ is
equivalent to the condition that $\left(  u^{\prime\prime}(t)-u^{\prime
}(t)\frac{\partial\varphi}{\partial s}\right)  /\varphi^{2}=\kappa u^{\prime
}(t)/\varphi$, i.e., the condition that%
\begin{equation}
\frac{\partial\varphi}{\partial s}+\kappa\varphi=\frac{u^{\prime\prime}%
(t)}{u^{\prime}(t)}\text{ .} \label{Eq Condition}%
\end{equation}

If $U$ is harmonic, then, by Lemma \ref{Lemma Curvature of Level Curves},
$D_{\mathbf{N}}^{2}U=\kappa D_{\mathbf{N}}U$ and hence (\ref{Eq Condition})
holds. In equation (\ref{Eq Condition}), the right side of the equality is
constant on each curve $\gamma_{t}$ and hence independent of the parameter
$\sigma$ that parametrizes $\gamma_{t}$\thinspace; so must the left side! This
proves the necessity of Condition (\ref{Eq Master Eq}) for $U$ to be harmonic.

Conversely, if Condition (\ref{Eq Master Eq}) holds, then, $\left(
\partial\varphi/\partial s\right)  +\kappa\varphi$ depends only on $t$.
Viewing $\left(  \partial\varphi/\partial s\right)  +\kappa\varphi$ as a
function of $t$, we can find a function $u$ of $t$ such that%
\[
\frac{d}{dt}\log u^{\prime}(t)=\frac{\partial\varphi}{\partial s}%
+\kappa\varphi\text{ .}%
\]
For any such $u$, (\ref{Eq Condition}) holds and hence the composite function
$U:=u\circ t$ satisfies the condition $D_{\mathbf{N}}^{2}U=\kappa
D_{\mathbf{N}}U$ on $\Omega$, which, by Lemma
\ref{Lemma Curvature of Level Curves}, implies that $U$ is harmonic.

\section{Extension to Higher Dimensions}

Turning to the $n$-variable case, we give an immediate extension of Theorem
\ref{Thm Main}. We begin with a family of hypersurfaces in $%
\mathbb{R}
^{n}$ and seek a condition that is necessary and sufficient for the existence
of a harmonic function whose level-set family is the given one. For
simplicity, suppose that the hypersurfaces in the family are all diffeomorphic
to $%
\mathbb{R}
^{n-1}$. As we shall remark following Theorem \ref{Thm High Dimensions}, this
is not a severe restriction.

Let $J$ denote the interval $(-\epsilon,\epsilon)$. Suppose that $\Phi:%
\mathbb{R}
^{n-1}\times J\rightarrow%
\mathbb{R}
^{n}$, with $\Phi(x;t)=\left(  y_{1}(x;t),\,y_{2}(x;t),\cdots,y_{n}%
(x;t)\right)  $, is an orientation-preserving $C^{2}$ diffeomorphism onto a
domain $\Omega$ in $%
\mathbb{R}
^{n}$. For each $t\in J$, define $\Gamma_{t}:%
\mathbb{R}
^{n-1}\rightarrow%
\mathbb{R}
^{n}$ to be the parametrized hypersurface $x\mapsto\Phi(x;t)$. Let
$\mathbf{n}$ be the Hodge dual of $(\partial\Phi/\partial x_{1})\wedge
(\partial\Phi/\partial x_{2})\wedge\cdots\wedge(\partial\Phi/\partial
x_{n-1})$, i.e.,%
\[
\mathbf{n}=(-1)^{n-1}\det\left[
\begin{array}
[c]{cccc}%
\mathbf{e}_{1} & \frac{\partial y_{1}}{\partial x_{1}} & \cdots &
\frac{\partial y_{1}}{\partial x_{n-1}}\\
\mathbf{e}_{2} & \frac{\partial y_{2}}{\partial x_{1}} & \cdots &
\frac{\partial y_{2}}{\partial x_{n-1}}\\
\vdots & \vdots & \vdots & \vdots\\
\mathbf{e}_{n} & \frac{\partial y_{n}}{\partial x_{1}} & \cdots &
\frac{\partial y_{n}}{\partial x_{n-1}}%
\end{array}
\right]  \text{ .}%
\]
Let%
\[
\varphi=\frac{\det d\Phi}{\left\Vert \mathbf{n}\right\Vert }\text{\quad
and\quad}\mathbf{N}=\frac{\mathbf{n}}{\left\Vert \mathbf{n}\right\Vert
}\text{.}%
\]
Orient each $\Gamma_{t}$ by the normal field $\mathbf{N}$. For $p\in\Omega$,
let $H(p)$ be the mean curvature at $p$ of the oriented hypersurface
$\Gamma_{t}$ on which $p$ lies. Let $s$ denote the arc-length parameter
(modulo an additive constant) along the integral curves of $\mathbf{N}$.

\begin{theorem}
\label{Thm High Dimensions}Assume the preceding hypotheses and notations.
There exists a critical-point-free harmonic function $U$ whose family of level
sets is $\left\{  \Gamma_{t}\right\}  $ iff $(\partial\varphi/\partial
s)+(n-1)H\varphi$ is constant on each hypersurface $\Gamma_{t}$, i.e.,%
\begin{equation}
\frac{\partial}{\partial x_{i}}\left(  \frac{\partial\varphi}{\partial
s}+(n-1)H\varphi\right)  =0\text{\quad for }i\in\{1,2,\cdots,(n-1)\}\text{.}
\label{Eq Master Eq - General}%
\end{equation}

\end{theorem}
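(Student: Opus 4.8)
The plan is to mimic the two-variable argument of \S2.2, replacing Lemma \ref{Lemma Curvature of Level Curves} with its $n$-dimensional analogue. First I would establish the higher-dimensional curvature identity: for a critical-point-free $C^{2}$ function $f$ on $\Omega\subset\mathbb{R}^{n}$ with unit normal field $\mathbf{N}=\nabla f/\lVert\nabla f\rVert$ on its level sets, one has $(n-1)H=\bigl(D_{\mathbf{N}}^{2}f-\Delta f\bigr)/D_{\mathbf{N}}f$, equivalently $f$ is harmonic iff $(n-1)H\equiv D_{\mathbf{N}}^{2}f/D_{\mathbf{N}}f$. This comes from the standard fact that the scalar second fundamental form of a level set $\{f=c\}$ with respect to $\mathbf{N}$ is $-\mathrm{Hess}\,f/\lVert\nabla f\rVert$ restricted to the tangent space, so that $(n-1)H=\operatorname{tr}$ of that form $=-(\Delta f-D_{\mathbf{N}}^{2}f)/\lVert\nabla f\rVert$; dividing by $D_{\mathbf{N}}f=\lVert\nabla f\rVert$ gives the claim. (Alternatively one can derive it exactly as in Lemma \ref{Lemma Curvature of Level Curves} by evaluating $\langle \tfrac{d}{dt}\nabla f(\gamma(t)),\gamma'(t)\rangle$ along unit-speed geodesics $\gamma$ of the level set through $p$ in $n-1$ orthonormal tangent directions and summing.)

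Next I would set $t:\Omega\to J$ to be the last component of $\Phi^{-1}$, so that $\{t=\text{const}\}$ are exactly the $\Gamma_{t}$'s. The inverse function theorem gives $\nabla t=\mathbf{N}/\varphi$ in the same way as before (this is precisely why $\varphi$ is defined as $\det d\Phi$ divided by the norm of the generalized cross product $\mathbf{n}$): $\nabla t$ is normal to each $\Gamma_{t}$ with $\langle\nabla t,\partial\Phi/\partial t\rangle=1$, and one checks the magnitude works out to $1/\varphi$. Any function whose level-set family is $\{\Gamma_{t}\}$ must be of the form $U=u\circ t$ for a critical-point-free $C^{2}$ function $u:J\to\mathbb{R}$; by Darboux's theorem $u'$ has a constant sign, and WLOG $u'>0$ so that $\nabla U/\lVert\nabla U\rVert=\mathbf{N}$ and the mean curvature of $U$'s level sets coincides with the given $H$.

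Then the computation of $D_{\mathbf{N}}U$ and $D_{\mathbf{N}}^{2}U$ is \emph{identical} to \S2.2, since it only involves differentiating $u\circ t$ along the unit normal flow: $D_{\mathbf{N}}U=u'(t)/\varphi$ and $D_{\mathbf{N}}^{2}U=\bigl(u''(t)-u'(t)\,\partial\varphi/\partial s\bigr)/\varphi^{2}$. Combining with the harmonicity criterion $D_{\mathbf{N}}^{2}U=(n-1)H\,D_{\mathbf{N}}U$ yields
\[
\frac{\partial\varphi}{\partial s}+(n-1)H\varphi=\frac{u''(t)}{u'(t)}\text{ .}
\]
For necessity: if $U$ is harmonic the right side depends only on $t$, hence the left side is independent of the parameters $x_{1},\dots,x_{n-1}$ along $\Gamma_{t}$, which is exactly (\ref{Eq Master Eq - General}). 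For sufficiency: if (\ref{Eq Master Eq - General}) holds, then $(\partial\varphi/\partial s)+(n-1)H\varphi$ is a function of $t$ alone; solve the ODE $\tfrac{d}{dt}\log u'(t)=(\partial\varphi/\partial s)+(n-1)H\varphi$ for $u$, and the resulting $U=u\circ t$ satisfies $D_{\mathbf{N}}^{2}U=(n-1)H\,D_{\mathbf{N}}U$ on $\Omega$, hence is harmonic by the $n$-dimensional curvature lemma.

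The main obstacle is the first step — correctly formulating and proving the relation between mean curvature and the normal/Laplacian data, together with carefully justifying $\nabla t=\mathbf{N}/\varphi$ with the Hodge-dual normalization. Once that lemma is in hand, the rest is a verbatim transcription of the two-variable proof, with $\kappa$ replaced by $(n-1)H$ and $\partial/\partial\sigma$ replaced by the collection $\partial/\partial x_{i}$. A minor point to handle cleanly is the sign/orientation bookkeeping: one must confirm that the orientation-preserving hypothesis on $\Phi$ makes $\varphi>0$ and that the mean curvature induced by $\nabla U/\lVert\nabla U\rVert$ agrees with the $H$ defined via $\mathbf{N}$, so that no spurious sign enters the final condition.
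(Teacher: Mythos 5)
Your proposal is correct and follows essentially the same route as the paper: the paper proves Theorem \ref{Thm High Dimensions} by invoking Lemma \ref{Lemma Curvature of Level Surfaces} (the identity $(n-1)H=(D_{\mathbf{N}}^{2}f-\Delta f)/D_{\mathbf{N}}f$, whose proof it omits) and then noting that the two-variable argument carries over verbatim with $\kappa$ replaced by $(n-1)H$, yielding exactly your equation $(\partial\varphi/\partial s)+(n-1)H\varphi=u''(t)/u'(t)$. Your sketch of the curvature lemma via the second fundamental form $-\mathrm{Hess}\,f/\lVert\nabla f\rVert$ and your verification of $\nabla t=\mathbf{N}/\varphi$ supply details the paper leaves implicit, but the argument is the same.
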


\medskip

\noindent\textit{Remark}. To a $C^{2}$ family of hypersurfaces $\Psi
:M^{n-1}\times(-\epsilon,\epsilon)\rightarrow%
\mathbb{R}
^{n}$, where $M^{n-1}$ is a connected $C^{2}$ $(n-1)$-manifold, Theorem
\ref{Thm High Dimensions} can be applied as follows.

Take a $C^{2}$ atlas of local charts $\phi_{i}:%
\mathbb{R}
^{n-1}\rightarrow\mathcal{O}_{i}\hookrightarrow M^{n-1}$. Define%
\[
\Phi_{i}:%
\mathbb{R}
^{n-1}\times(-\epsilon,\epsilon)\overset{\phi_{i}\times\operatorname*{Id}%
}{\longrightarrow}\mathcal{O}_{i}\times(-\epsilon,\epsilon)\hookrightarrow
M^{n-1}\times(-\epsilon,\epsilon)\overset{\Psi}{\rightarrow}%
\mathbb{R}
^{n}\text{ .}%
\]
Theorem \ref{Thm High Dimensions} then implies that the given family of
hypersurfaces is the level-set family of a critical-point-free harmonic
function iff $\Phi_{i}$ meet Condition (\ref{Eq Master Eq - General}) for all
$i$.

\medskip

All that is needed for the proof is a characterization of harmonicity that
extends Lemma \ref{Lemma Curvature of Level Curves}. We briefly recall the
mean curvature of a one-codimensional $C^{2}$ orientable submanifold $M$ of $%
\mathbb{R}
^{n}$. Let $M$ be oriented by a unit normal field $\mathbf{N}$. At a point
$p\in M$, any unit tangent vector $\mathbf{v}\in T_{p}M$ and the unit normal
$\mathbf{N}(p)$ span a plane $\Pi_{\mathbf{v}}$; the intersection curve
$M\cap\Pi_{\mathbf{v}}$ is a so-called normal section at $p$ and has signed
curvature $\kappa_{p}(\mathbf{v})=-D_{\mathbf{v}}^{2}f(p)/\left\Vert \nabla
f(p)\right\Vert $, as in (\ref{Eq Curvature Level Curves, General}). The mean
curvature $H(p)$ of $M$ at $p$ is simply the average of $\kappa_{p}%
(\mathbf{v})$ as $\mathbf{v}$ ranges over the unit $(n-2)$-sphere in the
$(n-1)$-dimensional $T_{p}M$. The following result, like Lemma
\ref{Lemma Curvature of Level Curves}, plays a key role in establishing
Theorem \ref{Thm High Dimensions}.

\begin{lemma}
\label{Lemma Curvature of Level Surfaces}For a $C^{2}$ real-valued function
$f$ on an open subset $\Omega$ of $%
\mathbb{R}
^{n}$ without critical points, let $\mathbf{N}=\nabla f/\left\Vert \nabla
f\right\Vert $ and let $H(p)$ be the mean curvature at $p\in\Omega$ of the
level-$f(p)$ hypersurface of $f$. Then,%
\begin{equation}
H(p)=\frac{1}{n-1}\frac{D_{\mathbf{N}}^{2}f(p)-\Delta f\left(  p\right)
}{D_{\mathbf{N}}f(p)}\text{ .} \label{Eq Mean Curvature}%
\end{equation}
Consequently, $f$ is harmonic iff%
\begin{equation}
H\equiv\frac{1}{n-1}\frac{D_{\mathbf{N}}^{2}f}{D_{\mathbf{N}}f}\text{\quad on
}\Omega\,\text{.} \label{Eq Mean Curvature, Harmonic}%
\end{equation}

\end{lemma}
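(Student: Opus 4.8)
The plan is to mimic the two-variable argument of Lemma \ref{Lemma Curvature of Level Curves}, replacing the single normal section by an averaging over all normal sections through $p$. Fix $p\in\Omega$ and set $\mathbf{N}=\nabla f/\left\Vert \nabla f\right\Vert $. For a unit tangent vector $\mathbf{v}\in T_pM$ to the level-$f(p)$ hypersurface $M$, the normal section $M\cap\Pi_{\mathbf{v}}$ is a plane curve to which the two-variable computation applies verbatim: parametrizing it by arc length as $\gamma_{\mathbf{v}}$ with $\gamma_{\mathbf{v}}(0)=p$, $\gamma_{\mathbf{v}}'(0)=\mathbf{v}$, and differentiating the identity $\left\langle \nabla f(\gamma_{\mathbf{v}}(\tau)),\gamma_{\mathbf{v}}'(\tau)\right\rangle\equiv 0$ at $\tau=0$ gives $Q_p(\mathbf{v},\mathbf{v})+\left\langle \nabla f(p),\gamma_{\mathbf{v}}''(0)\right\rangle=0$. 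Since $\gamma_{\mathbf{v}}''(0)=\kappa_p(\mathbf{v})\mathbf{N}(p)$ and $\left\langle \nabla f(p),\mathbf{N}(p)\right\rangle=\left\Vert \nabla f(p)\right\Vert=D_{\mathbf{N}}f(p)$, we recover $\kappa_p(\mathbf{v})=-D_{\mathbf{v}}^2 f(p)/D_{\mathbf{N}}f(p)=-Q_p(\mathbf{v},\mathbf{v})/D_{\mathbf{N}}f(p)$, which is exactly the formula quoted in the text preceding the lemma.

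Next I would average over $\mathbf{v}$ ranging on the unit $(n-2)$-sphere $S\subset T_pM$. By definition,
\[
H(p)=\frac{1}{\left\vert S\right\vert}\int_{S}\kappa_p(\mathbf{v})\,d\mathbf{v}=-\frac{1}{D_{\mathbf{N}}f(p)}\cdot\frac{1}{\left\vert S\right\vert}\int_{S}Q_p(\mathbf{v},\mathbf{v})\,d\mathbf{v}.
\]
The key linear-algebra fact is that the average of a quadratic form $Q$ over the unit sphere of a $k$-dimensional subspace $W$ equals $(\operatorname{Tr}Q|_W)/k$; here $k=n-1$ and $W=T_pM=\mathbf{N}(p)^{\perp}$. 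Hence the averaged value is $\operatorname{Tr}(Q_p|_{T_pM})/(n-1)$. Writing the trace of the full Hessian in the orthonormal basis $\{\mathbf{N}(p)\}\cup(\text{basis of }T_pM)$ gives $\Delta f(p)=\operatorname{Tr}Q_p=Q_p(\mathbf{N},\mathbf{N})+\operatorname{Tr}(Q_p|_{T_pM})=D_{\mathbf{N}}^2 f(p)+\operatorname{Tr}(Q_p|_{T_pM})$, so $\operatorname{Tr}(Q_p|_{T_pM})=\Delta f(p)-D_{\mathbf{N}}^2 f(p)$. Substituting,
\[
H(p)=-\frac{\Delta f(p)-D_{\mathbf{N}}^2 f(p)}{(n-1)\,D_{\mathbf{N}}f(p)}=\frac{1}{n-1}\cdot\frac{D_{\mathbf{N}}^2 f(p)-\Delta f(p)}{D_{\mathbf{N}}f(p)},
\]
which is (\ref{Eq Mean Curvature}). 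The consequence is immediate: $f$ harmonic means $\Delta f\equiv 0$, so $H\equiv\frac{1}{n-1}D_{\mathbf{N}}^2 f/D_{\mathbf{N}}f$; conversely that identity forces $\Delta f\equiv 0$ since $D_{\mathbf{N}}f=\left\Vert \nabla f\right\Vert\neq 0$. This gives (\ref{Eq Mean Curvature, Harmonic}).

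The only step requiring genuine care is the averaging identity $\frac{1}{\left\vert S\right\vert}\int_S Q(\mathbf{v},\mathbf{v})\,d\mathbf{v}=\frac{1}{k}\operatorname{Tr}(Q|_W)$. I would justify it by symmetry: the integral $\int_S v_i v_j\,d\mathbf{v}$ (coordinates with respect to an orthonormal basis of $W$) vanishes for $i\neq j$ by the antipodal/reflection symmetry of $S$, and equals a common value $c$ for $i=j$ by the rotational symmetry of $S$; summing over the diagonal, $kc=\int_S\sum_i v_i^2\,d\mathbf{v}=\int_S 1\,d\mathbf{v}=\left\vert S\right\vert$, so $c=\left\vert S\right\vert/k$, whence $\int_S Q(\mathbf{v},\mathbf{v})\,d\mathbf{v}=\sum_{i}Q_{ii}\int_S v_i^2\,d\mathbf{v}=(\left\vert S\right\vert/k)\operatorname{Tr}(Q|_W)$. (One subtlety worth a remark: the definition of $H$ as the average of $\kappa_p(\mathbf{v})$ over normal sections agrees with $\frac{1}{n-1}$ times the sum of principal curvatures, since in a principal-directions basis $\kappa_p(\mathbf{v})$ restricted to those axes gives the principal curvatures and the average of the diagonal form is their mean — this is the standard fact that the mean of the normal-section curvatures is the mean of the eigenvalues.) Everything else is the same differentiation-of-an-orthogonality-relation trick already used for Lemma \ref{Lemma Curvature of Level Curves}, now applied plane-by-plane.
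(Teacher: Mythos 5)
Your proof is correct and follows exactly the route the paper indicates: the paper omits the proof of (\ref{Eq Mean Curvature}) but already supplies the normal-section curvature formula $\kappa_p(\mathbf{v})=-D_{\mathbf{v}}^{2}f(p)/\left\Vert \nabla f(p)\right\Vert$ and defines $H$ as the spherical average, so the only missing ingredient is the averaging identity $\frac{1}{|S|}\int_{S}Q(\mathbf{v},\mathbf{v})\,d\mathbf{v}=\frac{1}{n-1}\operatorname{Tr}(Q|_{T_pM})$ together with the trace splitting $\Delta f=D_{\mathbf{N}}^{2}f+\operatorname{Tr}(Q_p|_{T_pM})$, both of which you justify correctly. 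Nothing further is needed.
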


We omit the simple proof of (\ref{Eq Mean Curvature}). The proof of Theorem
\ref{Thm High Dimensions} parallels that of Theorem \ref{Thm Main}; it
suffices to note that equation (\ref{Eq Condition}), key to Theorem
\ref{Thm Main}, now takes the form%
\begin{equation}
\frac{\partial\varphi}{\partial s}+(n-1)H\varphi=\frac{u^{\prime\prime}%
(t)}{u^{\prime}(t)}\text{ .} \label{Eq Condition General}%
\end{equation}

From Lemma \ref{Lemma Curvature of Level Surfaces}, we deduce Theorem
\ref{Thm Gradient} stated in \S 1.

\medskip

\noindent\textbf{Proof of Theorem \ref{Thm Gradient}}. Let $U$ be a
critical-point-free harmonic function on a domain $\Omega\subset%
\mathbb{R}
^{n}$. Let $\mathbf{N}=\nabla U/\left\Vert \nabla U\right\Vert $. Let
$s\mapsto\alpha(s)$ be the unit-speed gradient flow originating from $p_{0}$;
$\alpha$ is such that%
\[
\alpha(0)=p_{0}\text{\quad and\quad}\alpha^{\prime}(s)=\mathbf{N}%
(\alpha(s))\text{ .}%
\]

Consider $g:=U\circ\alpha$. It is elementary that%
\[
g^{\prime}(s)=D_{\mathbf{N}}U(\alpha(s))=\left\Vert \nabla U(\alpha
(s))\right\Vert \text{\quad and\quad}g^{\prime\prime}(s)=D_{\mathbf{N}}%
^{2}U(\alpha(s))\text{ .}%
\]
By (\ref{Eq Mean Curvature, Harmonic}),%
\begin{equation}
(n-1)H(\alpha(s))=\frac{g^{\prime\prime}(s)}{g^{\prime}(s)}=\frac{d}{ds}\log
g^{\prime}(s)=\frac{d}{ds}\log\left\Vert \nabla U(\alpha(s))\right\Vert \text{
.}\nonumber
\end{equation}
Integrating both sides yields that%
\[
\left\Vert \nabla U(\alpha(s))\right\Vert =\left\Vert \nabla U(p_{0}%
)\right\Vert \exp\left(  (n-1)\int_{0}^{s}H(\alpha(\xi))\,d\xi\right)  \text{
,}%
\]
proving Theorem \ref{Thm Gradient}.\qquad\qquad$\blacksquare$

\medskip

\noindent\textit{Remark}. When $n=2$ in Theorem \ref{Thm Gradient}, the mean
curvature $H$ of level hypersurfaces is replaced by the signed curvature
$\kappa$ of level curves, resulting in%
\[
\left\Vert \nabla U(\alpha(s))\right\Vert =\left\Vert \nabla U(\alpha
(0))\right\Vert \exp\left(  \int_{0}^{s}\kappa(\alpha(\xi))\,d\xi\right)
\text{ .}%
\]
This formula is suggested in \cite{J-R} by a purely complex-analytic argument,
as a harmonic function on a planar domain is locally the real part of a
complex analytic function. However, that argument does not apply when $n>2$.

\section{Constructing a Harmonic Function from Its Family of Level Sets}

Suppose that the hypersurface family $\left\{  \Gamma_{t}\right\}  $ defined
in Theorem \ref{Thm High Dimensions} satisfies (\ref{Eq Master Eq - General})
and therefore is the level-set family of a harmonic function $U$. We seek to
determine $U$ from $\left\{  \Gamma_{t}\right\}  $. As we shall see, the final
outcome will provide a link between Theorems \ref{Thm High Dimensions} and
\ref{Thm Gradient}, the two main results of this note.

As in \S 2.2, for $y\in\Omega$, suppose that $U(y)=u(t(y))$ with $u^{\prime
}>0$ on $(-\epsilon,\epsilon)$. Recall equation (\ref{Eq Condition General}),
which is equivalent to $U$ being harmonic:%
\[
\frac{u^{\prime\prime}(t)}{u^{\prime}(t)}=\frac{\partial\varphi}{\partial
s}+(n-1)H\varphi\text{ .}%
\]
Integrating both sides yields%
\begin{equation}
\log\frac{u^{\prime}(t)}{u^{\prime}(0)}=\int_{0}^{t}\left.  \left(
\frac{\partial\varphi}{\partial s}+(n-1)H\varphi\right)  \right\vert
_{(x(\tau),\tau)}d\tau\text{ ,} \label{Eq log(u')}%
\end{equation}
where $x(\tau)\in%
\mathbb{R}
^{n-1}$ is so chosen that the curve%
\[
\ell:(-\epsilon,\epsilon)\rightarrow\Omega,\text{\ }\tau\mapsto\Phi
(x(\tau);\tau)
\]
is a parametrization of the integral curve of $\mathbf{N}$ originating at
$\Phi(\mathbf{0};0)$. (Due to (\ref{Eq Master Eq - General}), the expression
$(\partial\varphi/\partial s)+(n-1)H\varphi$ is constant in $x$; therefore,
for its evaluation at $\left(  x;\tau\right)  $, we are free to choose $x$.
Our particular choice facilitates further evaluation of the integral in
(\ref{Eq log(u')}).) As $\nabla t=(1/\varphi)\mathbf{N}$, the curve $\ell$ is
also a parametrization of the gradient flow of $t$ originating at
$\Phi(\mathbf{0};0)$; with $s$ being the arc-length parameter along $\ell$,
$dt/ds=\left\Vert \nabla t\right\Vert =1/\varphi$. Thus,%
\[
\int_{0}^{t}\frac{\partial\varphi}{\partial s}d\tau=\int_{\ell}\frac
{\partial\varphi}{\partial s}\frac{1}{\varphi}ds=\log\frac{\varphi
(x(t);t)}{\varphi(\mathbf{0};0)}\text{ ,}%
\]
which allows us to conclude from (\ref{Eq log(u')}) that%
\[
u^{\prime}(t)=\frac{u^{\prime}(0)}{\varphi(\mathbf{0};0)}\varphi
(x(t);t)\exp\int_{0}^{t}(n-1)H(\ell(\tau))\varphi(x(\tau);\tau)\,d\tau\text{
.}%
\]
Integrating, we have%
\[
u(T)=u(0)+\frac{u^{\prime}(0)}{\varphi(\mathbf{0};0)}\int_{0}^{T}\left(
\varphi(x(t);t)\exp\int_{0}^{t}(n-1)H(\ell(\tau))\varphi(x(\tau);\tau
)\,d\tau\right)  dt\text{ .}%
\]
Substituting $ds$ for $\varphi d\tau$ and $\varphi dt$ in the two integrals,
the above formula can written in terms of line integrals along $\ell$ with
respect to arc length $s$:%
\begin{equation}
U(\Phi(\mathbf{0};T))=U(\Phi(\mathbf{0};0))+\left\Vert \nabla U(\Phi
(\mathbf{0};0))\right\Vert \int_{\left.  \ell\right\vert _{[0.T]}}e^{(n-1)I}ds
\label{Eq Formula for U}%
\end{equation}
where%
\[
I(\ell(t))=\int_{\left.  \ell\right\vert _{[0.t]}}H\,ds\text{ .}%
\]

\medskip

\noindent\textit{Remark}.\ Formula (\ref{Eq Formula for U}), derived from
Theorem \ref{Thm High Dimensions}, provides a satisfying and corroborating
link between Theorems \ref{Thm High Dimensions} and \ref{Thm Gradient}. By
Theorem \ref{Thm Gradient},%
\[
\left\Vert \nabla U(\Phi(\mathbf{0};0))\right\Vert e^{(n-1)I(\ell
(t))}=\left\Vert \nabla U(\ell(t))\right\Vert \text{ ;}%
\]
with this, Formula (\ref{Eq Formula for U}) then reads%
\[
U(\Phi(\mathbf{0};T))=U(\Phi(\mathbf{0};0))+\int_{\left.  \ell\right\vert
_{[0.T]}}\left\Vert \nabla U\right\Vert ds\text{ ,}%
\]
which is a statement of the fundamental theorem of calculus.


\begin{thebibliography}{9}                                                                                                %


\bibitem {F-D-S}L.\thinspace Flatto, D.\thinspace J.\thinspace Newman,
H.\thinspace S.\thinspace Shapiro, The Level Curves of Harmonic Functions.
\textit{Transactions of the American Mathematical Society,} \textbf{123}
(1966), 425-436.

\bibitem {J-R}R. P. Jerrard and L. A. Rubel, On the Curvature of the Level
Lines of a Harmonic Function, \textit{Proceedings of the American Mathematical
Society}, \textbf{14} (1963), 29-32.
\end{thebibliography}
\end{document}